\newtheorem{theorem}{Theorem}[section]
\newtheorem{lemma}[theorem]{Lemma}
\newtheorem{proposition}[theorem]{Proposition}
\newtheorem{corollary}[theorem]{Corollary}
\theoremstyle{definition}
\newtheorem{example}[theorem]{Example}
\theoremstyle{remark}
\newtheorem{remark}[theorem]{Remark}
\numberwithin{equation}{section}
\newcommand{\Oh}{\mathcal{O}}
\newcommand{\sI}{\mathcal{I}}
 \newcommand{\Ext}{\operatorname{Ext}}
\newcommand{\Def}{\operatorname{Def}}
\newcommand{\C}{\mathbb{C}}
\newcommand{\R}{\mathbb{R}}
\newcommand{\Z}{\mathbb{Z}}
\newenvironment{acknowledgement}{\par\addvspace{17pt}\small\rm
\trivlist\item[\hskip\labelsep{\it Acknowledgement.}]}
{\endtrivlist\addvspace{6pt}}
\begin{document}


\title{Deformations of Calabi-Yau manifolds in Fano toric varieties}

\begin{abstract}
In this article, we investigate deformations of a Calabi-Yau manifold $Z$ in a toric variety $F$, possibly not smooth. In particular, we prove that the forgetful morphism from the Hilbert functor $H^F_Z$ of infinitesimal deformations of $Z$ in $F$ to the  functor of infinitesimal deformations of $Z$ is smooth. This implies the smoothness of $H^F_Z $ at the corresponding point in the Hilbert scheme. Moreover, we give some examples and include some computations on the Hodge numbers of Calabi-Yau manifolds in Fano toric varieties.
\end{abstract}

\author{Gilberto Bini}
\address{\newline
Universit\`a degli Studi di Palermo,\hfill\newline
Dipartimento di Matematica e Informatica,\hfill\newline
Via Archirafi n. 34,
90123 Palermo, Italy.}
\email{gilberto.bini@unipa.it }

\author{Donatella Iacono}
\address{\newline  Universit\`a degli Studi di Bari,
\newline Dipartimento di Matematica,
\hfill\newline Via E. Orabona 4,
70125 Bari, Italy.}
 \email{donatella.iacono@uniba.it }

\maketitle

\section{Introduction}

In this paper, we focus our attention on Calabi-Yau manifolds, i.e., projective manifolds with trivial canonical bundle and without holomophic $p$-forms. More precisely, if we focus on dimension greater than or equal to three, $Z$ is a Calabi-Yau manifold of dimension $n$ if the canonical bundle $K_Z := \Omega^n_Z$ is trivial and $H^0(Z, \Omega^p_Z)$ vanishes for $p$ in between $0$ and $n$. Since the canonical bundle is trivial, $Z$ has unobstructed deformations, i.e., the moduli space of deformations of $Z$ is smooth. This is the famous Bogomolov-Tian-Todorov Theorem \cite{bogomolov, tian,todorov}. A more algebraic proof of this fact 
\cite{kawa,zivran,algebraicBTT}  shows that the functor $\Def_Z$ of infinitesimal deformations of  $Z$ is smooth too. In particular, the dimension of the moduli space at the point corresponding to $Z$  is  the dimension of $H^1(Z,T_Z)$, where $T_Z$ denotes the tangent bundle of $Z$. Although we know that the moduli space is smooth, we still miss a geometric understanding of it; for instance, the number of its irreducible components is unknown. A famous conjecture by M. Reid claims that the moduli space of simply connected smooth  Calabi-Yau threefolds is connected via conifold transitions \cite{reid}. The general picture is still unknown but in some cases there has been quite a lot of progress. For example, the moduli spaces of complete intersection Calabi-Yau 3-folds  in products of projective spaces are connected with each other by a sequence of conifold transitions  (see \cite{wang} and references therein).

If $Z$ is contained in an ambient manifold $X$, we can investigate the deformation functor $H^{X}_Z$ of deformations of  $Z$ in  $X$  (fixed) and the forgetful functor $ \phi: H^X_Z \to \Def_Z $, which associates with an infinitesimal deformation of $Z$ in $X$ the isomorphism class of the deformation of $Z$. For example, if $\phi$ is smooth we can conclude that all deformations of $Z$ lie in $X$ and, since $\Def_Z $ is smooth, the functor $H^X_Z$ is also smooth \cite[Proposition 2.2.5]{Sernesi}.

For every Calabi-Yau manifold $Z$ of dimension at least 3 in projective space, the embedded deformations of $Z$ in $\mathbb{P}^n$ are unobstructed. This follows from the vanishing  $ H^1(Z, T_{\mathbb{P}^n|Z})=0$
\cite[Corollary A.2]{huy} that implies that the forgetful morphism $\phi: H^{\mathbb{P}^n}_Z \to \Def_Z $ is smooth, i.e., all deformations of $Z$ as an abstract variety are contained in $\mathbb{P}^n$. Note that dimension at least 3 is fundamental, since in dimension 2 the same statement does not hold  (see also Remark \ref{remark k3}). Moreover, since $ \Def_Z $ is smooth, we can conclude that $H^{\mathbb{P}^n}_Z$ is also smooth. Note that this does not imply that any two Calabi-Yau manifolds of the same dimension in $\mathbb{P}^n$ are deformation equivalent: for instance, explicit examples of threefolds in $\mathbb{P}^6$ that are not deformation equivalent are constructed in \cite{bertin}. 

The projective space $\mathbb{P}^n$ is a toric Fano manifold, i.e., a smooth toric variety with ample anticanonical bundle. Therefore, it is natural to investigate whether the previous results for ${\mathbb{P}^n}$ can be generalised to any toric Fano variety $F$, not simply ${\mathbb{P}^n}$ or the smooth ones.
The interest in toric Fano varieties is motivated both from the mathematics and the physics viewpoint; indeed these varieties have an essential role in the Minimal Model Program and Mirror Symmetry (see, for instance, \cite{rossi} for a recent work on the latter topic). In \cite{petracci}, the author investigates deformation theory of toric Fano varieties.

 In \cite{bi}, we investigated Calabi-Yau manifolds that are anticanonical divisors in toric Fano manifolds of dimension greater than or equal to 4. In particular, we proved that the forgetful morphism 
$ \phi: H^F_Z \to \Def_Z $ is smooth, i.e., all deformations of $Z$ as abstract variety are contained in $F$ \cite[Proposition 1]{bi}.

In this paper, we generalise these results considering as ambient space  a projective simplicial toric Fano variety $F$ and as subvariety a Calabi-Yau  manifold  $Z$  embedded in the Zariski open set of regular points of $F$. Under this assumption  we  investigate  the forgetful morphism 
$ \phi: H^F_Z \to \Def_Z $. In particular,  the following holds (Theorem \ref{teo.forget smooth}).
\begin{theorem} 
Let $F$ be a projective simplicial toric Fano variety   with  $K_{F}= - \sum_{\rho \in \Sigma(1)} D_{\rho}$ its canonical bundle and $Z\subset F$ a Calabi-Yau sumbanifold of dimension greater than or equal to 3, embedded in the Zariski open set of regular points of $F$.  If for all $ \rho \in \Sigma(1)$ we have 
\[
H^1(Z, \Oh_{F} (D_{\rho})\otimes \Oh_Z)=0,
\]
then, the forgetful morphism
$
 \phi: H^{F}_Z \to \Def_Z
$
is smooth.
\end{theorem}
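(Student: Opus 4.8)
The plan is to reduce the smoothness of $\phi$ to the single cohomological vanishing $H^1(Z, T_F|_Z) = 0$, and then to establish the latter via the generalized Euler sequence of the toric variety $F$.

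First I would set up the comparison of obstruction theories. Since $Z$ is contained in the smooth locus of $F$, its normal sheaf $N_{Z/F}$ is a vector bundle and there is a short exact sequence of locally free sheaves on $Z$,
\[
0 \to T_Z \to T_F|_Z \to N_{Z/F} \to 0 .
\]
The functor $H^F_Z$ has tangent space $H^0(Z, N_{Z/F})$ and obstruction space $H^1(Z, N_{Z/F})$, while $\Def_Z$ has tangent space $H^1(Z, T_Z)$ and obstruction space $H^2(Z, T_Z)$; moreover the differential of $\phi$ and the comparison of obstructions are induced by the connecting homomorphisms of the long exact cohomology sequence of the displayed sequence. If $H^1(Z, T_F|_Z) = 0$, this long exact sequence shows at once that $H^0(Z, N_{Z/F}) \to H^1(Z, T_Z)$ is surjective and that $H^1(Z, N_{Z/F}) \to H^2(Z, T_Z)$ is injective. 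By the standard smoothness criterion for a morphism of deformation functors equipped with compatible obstruction theories, namely surjective tangent map together with injective obstruction map, this forces $\phi$ to be smooth. Thus everything reduces to proving $H^1(Z, T_F|_Z) = 0$, in complete parallel with the condition $H^1(Z, T_{\proj^n}|_Z) = 0$ used in the projective-space case.

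To compute this group I would use the generalized Euler sequence of the simplicial toric variety $F$,
\[
0 \to \Oh_F^{\oplus(|\Sigma(1)| - \dim F)} \to \bigoplus_{\rho \in \Sigma(1)} \Oh_F(D_\rho) \to T_F \to 0,
\]
which holds on the smooth locus of $F$ as a sequence of vector bundles. Restricting it to $Z$, which is legitimate since $Z$ lies in the smooth locus where all the sheaves are locally free, keeps the sequence exact, and passing to cohomology yields the fragment
\[
\bigoplus_{\rho \in \Sigma(1)} H^1(Z, \Oh_F(D_\rho)\otimes \Oh_Z) \to H^1(Z, T_F|_Z) \to H^2(Z, \Oh_Z)^{\oplus(|\Sigma(1)| - \dim F)} .
\]
The left-hand term vanishes by the hypothesis of the theorem. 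For the right-hand term I would invoke the Calabi-Yau condition: since $\dim Z \geq 3$ one has $0 < 2 < \dim Z$, so $H^0(Z, \Omega^2_Z) = 0$, and by Hodge symmetry $H^2(Z, \Oh_Z)$ has the same dimension and therefore also vanishes. Squeezed between two zero groups, $H^1(Z, T_F|_Z)$ must vanish, which completes the argument.

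The subtle point to handle with care is the passage from the singular ambient $F$ to the bundle sequences on $Z$: the tangent sheaf $T_F$ is not locally free at the singularities of $F$, so both the normal-bundle sequence and the Euler sequence must be read on the regular locus containing $Z$ and then restricted, rather than on all of $F$. I expect the main substantive obstacle to be verifying that the deformation-theoretic reduction in the first step is genuinely valid in this mildly singular setting, that is, that the obstruction theory of $H^F_Z$ is controlled by $H^\bullet(Z, N_{Z/F})$ and compatible with that of $\Def_Z$ through the connecting maps of the normal sequence. This is exactly where the embedding of $Z$ into the regular locus of $F$ is essential; once that compatibility is secured, the vanishing $H^2(Z, \Oh_Z) = 0$, and thereby the hypothesis $\dim Z \geq 3$, does all the remaining work.
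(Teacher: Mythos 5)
Your proposal is correct and follows essentially the same route as the paper: the paper first proves (as a separate theorem) that $H^1(Z,T_{F|Z})=0$ implies smoothness of $\phi$ via the normal bundle sequence and the standard tangent/obstruction criterion, and then deduces this vanishing exactly as you do, by restricting the dualized generalized Euler sequence to $Z$ and using the hypothesis together with $H^2(Z,\Oh_Z)=0$ for a Calabi--Yau of dimension at least $3$ (your rank $|\Sigma(1)|-\dim F$ is the rank $t$ of $CL(F)$ used in the paper). The only presentational difference is that the paper packages the deformation-theoretic reduction as a standalone statement, with the same care you take about $Z$ lying in the regular locus so that the conormal and Euler sequences restrict as exact sequences of locally free sheaves.
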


In particular, if $Z$ is a Calabi-Yau manifold, of dimension greater than or equal to 3, which is a complete intersection of very ample divisors,   then  the previous theorem applies if the restriction of all $D_{\rho}$ to $Z$ are nef divisors (Corollary \ref{corollario complete intrsection}). 
We prove Theorem \ref{teo.forget smooth} by showing the vanishing $H^1(Z,T_{F|Z})=0$, which is a sufficient condition for the smoothness of the  forgetful morphism $  \phi: H^{F}_Z \to \Def_Z$. This implication is well known for the smooth case , see for example \cite[Proposition 3.2.9]{Sernesi}. It can be also proved via Horikawa's co-stability theorem for the inclusion morphism $Z\hookrightarrow F$ \cite{hori} or \cite[Section 3.4.5]{Sernesi}.  For the reader's convenience, we give an explicit proof of this fact under our assumptions (see Theorem \ref{firsttheorem}). Note that the vanishing $H^1(Z,T_{F|Z})=0$ is not a necessary condition for the smoothness of the  forgetful morphism $  \phi: H^{F}_Z \to \Def_Z$ (Remark \ref{remark H1=0 non necessario}).

\bigskip
 
In \cite{bertin}, the author also focuses her attention on Calabi-Yau threefolds of codimension 4  in $\mathbb{P}^7$ with Picard number equals to $1$. Using Commutative Algebra methods, new examples are built and their Hodge numbers are investigated. Then, following this approach, we devote our attention to the computation of Hodge numbers of Calabi-Yau submanifolds  $Z$ in a toric Fano variety $F$. In particular, our calculations focus on the cases with $ H^1(Z, T_{F|Z})=0$ and $\dim Z=3,4$ (Section \ref{section conti hodge}). These includes some examples of Calabi-Yau threefold in weighted projective spaces (Section \ref{sezione weighted p spaces}).

Throughout  the paper, we work over the field of complex numbers.
If not otherwise stated, by a toric variety  $F$ we mean a projective simplicial toric Fano variety $F$. We denote by $Z$ a sumbanifold  of $F$ embedded in the Zariski open set of regular points of $F$; thus, $Z$ can be covered by smooth affine open sets.

 In Section \ref{section embedding} we collect some results on toric Fano varieties  and we prove the main theorem on the smoothness of the forgetful functor (Theorem
\ref{firsttheorem}).
Section \ref{section esempi} is devoted to examples of Calabi-Yau submanifolds $Z$ in toric Fano variety $F$, such that  the forgetful functor is smooth.
Finally, Section \ref{section conti hodge} contains some computations on the Hodge numbers of Calabi-Yau threefolds and fourfolds in a toric Fano variety. In particular, we describe examples of Calabi-Yau threefolds in weighted projective spaces and complete intersections fourfolds.

\section{Embeddings in Fano varieties}\label{section embedding}
  In this section, we will follow the notation of the book \cite{coxlittle}, we refer the reader to this book and especially to Chapter 4 for further details. 
Let $F$ be a projective  simplicial toric variety with no torus factors, i.e.,  $\{u_\rho\ | \rho \in \Sigma(1)\}$ spans $N_\R$, where $\Sigma$ is the fan of $F$ in $N_\R$ and $\Sigma(1)$  denotes the 1-dimensional cones of  $\Sigma$.  
 We recall that $F$ is simplicial  when   every $\sigma \in \Sigma$ is simplicial, meaning that the minimal generators of $\sigma$ are linearly independent over $\R$ \cite[pag.180]{coxlittle}.

Moreover, for any strongly convex cone $\sigma \in N_\R$, we denote by $\sigma(1)$ its rays. Also, under our assumptions it makes sense to talk about the canonical divisor $K_{F}$, which can be written as $K_{F}= - \sum_{\rho \in \Sigma(1)} D_{\rho}$ (for further details, we refer the reader to \cite[Chapter 4]{coxlittle}). The variety $F$ is Fano if its anticanonical divisor $-K_F$ is ample. Note that in this case $F$ has no torus factors.
Let ${{\hat{\Omega}}^1}_{F}$ be the sheaf of Zariski 1-differentials. Recall that ${{\hat{\Omega}}^1}_{F}$ is the double dual of the sheaf of K\"ahler differentials ${\Omega}^1_{F}$. Moreover, as proved for instance in \cite[ p. 56]{popa}, the dual of ${{\hat{\Omega}}^1}_{F}$ and the dual of ${\Omega}^1_{F}$ are isomorphic and we denote it by  $T_{F}:=Hom({{\hat{\Omega}}^1}_{F}, \Oh_{F}) =  Hom({{\Omega}^1}_{F}, \Oh_{F}) $.
 
 The hypothesis that $F$ is a  simplicial toric variety with no torus factors is needed for the existence of a generalized Euler exact sequence  \cite[Theorem 8.1.6]{coxlittle}, as in the case for projective spaces;  namely:
\begin{equation}\label{generalized euler for singular}
 0 \to {{\hat{\Omega}}^1}_{F}    \to  \bigoplus_{\rho \in \Sigma(1)}   \Oh_{F}(-D_{\rho}) \to CL({F} )\otimes_{\Z}  \Oh_{F}\ \to 0,
\end{equation}
where $CL(F)$ denotes the divisor class group of $F$.

\begin{theorem}
\label{firsttheorem}
Let $F$ be a simplicial toric Fano variety and  $Z$ a smooth subvariety embedded in the Zariski open set of regular points of $F$. Then, the deformation functor $\phi: H^{F}_Z \to \Def_Z$ is smooth if $H^1(Z, {T_{F} }_{|Z})=0$.
\end{theorem}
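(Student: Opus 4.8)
The plan is to derive the smoothness of $\phi$ from the standard infinitesimal lifting criterion for a morphism of deformation functors, using the vanishing $H^1(Z,T_{F|Z})=0$ to control simultaneously the differential and the obstruction map of $\phi$. Since $Z$ lies in the smooth locus of $F$, the restriction $T_{F|Z}$ is locally free on $Z$ and the normal sheaf $N_{Z/F}$ is a vector bundle, so the tangent sequence of the embedding $Z\hookrightarrow F$ is the short exact sequence
\[
0 \to T_Z \to T_{F|Z} \to N_{Z/F} \to 0 .
\]
First I would pass to the associated long exact cohomology sequence
\[
\cdots \to H^0(Z,T_{F|Z}) \to H^0(Z,N_{Z/F}) \xrightarrow{\ \delta^0\ } H^1(Z,T_Z) \to H^1(Z,T_{F|Z}) \to H^1(Z,N_{Z/F}) \xrightarrow{\ \delta^1\ } H^2(Z,T_Z) \to \cdots
\]
and recall the tangent--obstruction theories of the two functors: $H^F_Z$ has tangent space $H^0(Z,N_{Z/F})$ and obstruction space $H^1(Z,N_{Z/F})$, while $\Def_Z$ has tangent space $H^1(Z,T_Z)$ and obstruction space $H^2(Z,T_Z)$. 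Under these identifications the differential $d\phi$ is precisely the connecting homomorphism $\delta^0$.

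The key consequence of the hypothesis $H^1(Z,T_{F|Z})=0$ is purely formal from the exact sequence: the map $\delta^0$ is surjective (its cokernel injects into $H^1(Z,T_{F|Z})=0$) and the map $\delta^1$ is injective (its kernel is the image of $H^1(Z,T_{F|Z})=0$). I would then establish the crucial compatibility of the obstruction theories: for any embedded deformation of $Z$ in $F$ over an Artinian base, the obstruction to lifting the underlying \emph{abstract} deformation of $Z$ is the image under $\delta^1$ of the obstruction to lifting the \emph{embedded} deformation. This matches the differential and obstruction maps of $\phi$ with the connecting maps $\delta^0,\delta^1$, exactly as in the smooth ambient case of \cite[Proposition 3.2.9]{Sernesi}.

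Granting this compatibility, the lifting criterion runs as follows. Let $0\to I\to A'\to A\to 0$ be a small extension in $\Art$, let $\xi\in H^F_Z(A)$, and suppose $\phi(\xi)\in\Def_Z(A)$ admits a lift $\eta'\in\Def_Z(A')$. Existence of $\eta'$ means the obstruction of $\phi(\xi)$ in $H^2(Z,T_Z)\otimes I$ vanishes; by compatibility this obstruction equals $\delta^1$ applied to the obstruction of $\xi$ in $H^1(Z,N_{Z/F})\otimes I$, and injectivity of $\delta^1$ forces the latter to vanish, so $\xi$ lifts to some $\xi''\in H^F_Z(A')$. It remains to correct $\xi''$ so that $\phi(\xi'')=\eta'$: the liftings of $\xi$ form a torsor under $H^0(Z,N_{Z/F})\otimes I$, the liftings of $\phi(\xi)$ form a torsor under $H^1(Z,T_Z)\otimes I$, and $\phi$ intertwines these actions through $\delta^0$. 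Since $\phi(\xi'')$ and $\eta'$ differ by a class in $H^1(Z,T_Z)\otimes I$, the surjectivity of $\delta^0$ yields an element of $H^0(Z,N_{Z/F})\otimes I$ whose action carries $\xi''$ to a lift $\xi'$ with $\phi(\xi')=\eta'$. This gives the required lifting, hence $\phi$ is smooth.

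The main obstacle is the compatibility step of the second paragraph: one must verify that $\delta^0$ and $\delta^1$ genuinely compute the differential and the obstruction map of $\phi$, and not merely agree with them on dimensions. Because $F$ is assumed smooth only along $Z$, I would also take care to confirm that the embedded deformation theory of $Z$ inside the possibly singular $F$ is governed by $N_{Z/F}$ exactly as in the smooth ambient situation; this is legitimate since all the sheaves entering the tangent sequence are locally free in a neighbourhood of $Z$ contained in the regular locus of $F$, so every cohomological computation takes place on the smooth manifold $Z$.
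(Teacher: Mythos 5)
Your proposal is correct and follows essentially the same route as the paper: both use the normal exact sequence $0\to T_Z\to T_{F|Z}\to N_{Z/F}\to 0$ (valid because $Z$ sits in the regular locus of $F$, so the conormal sheaf is locally free), read off from the long exact cohomology sequence that $H^1(Z,T_{F|Z})=0$ forces surjectivity on tangent spaces and injectivity on obstruction spaces, and then invoke the standard smoothness criterion for morphisms of deformation functors. The only difference is presentational: you unpack the lifting criterion and the compatibility of the two tangent--obstruction theories explicitly, whereas the paper delegates exactly this to \cite[Proposition 3.2.9]{Sernesi} and \cite[Theorem 4.11]{ManettiSeattle}.
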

\begin{proof}
We follow the usual approach. Consider the  generalized Euler exact sequence  \eqref{generalized euler for singular}:
\[
 0 \to {{\hat{\Omega}}^1}_{F}    \to  \bigoplus_{\rho \in \Sigma(1)}   \Oh_{F}(-D_{\rho}) \to CL({F} )\otimes_{\Z}  \Oh_{F}\ \to 0.
\]
Note that  the divisor class group $CL(F)$ of $F$
 is a finitely generated abelian group that can have torsion \cite[p. 172]{coxlittle}. We denote by $t$ the rank of   $CL(F)$.
 Consider the dual of the above exact  sequence \eqref{generalized euler for singular}, i.e., apply the functor $Hom_{\Oh_{F}}(-,  \Oh_{F})$ to obtain
\begin{equation}\label{dual generalized euler for singular}
 0 \to  \Oh_{F} ^{\oplus t} \to  \bigoplus_{\rho \in \Sigma(1)}   \Oh_{F}(D_{\rho})
\to Hom_{\Oh_{F}}({{\hat{\Omega}}^1}_{F} ,  \Oh_{F}) \to Ext^1_{\Oh_{F}}( CL({F} )\otimes_{\Z}  \Oh_{F} , \Oh_{F})\cdots .
\end{equation}
Note that the sheaf $ Ext^1_{\Oh_{F}}( CL({F} )\otimes_{\Z}  \Oh_{F} , \Oh_{F})=0$. Since $CL(F)$ is a finitely generated group, the torsion subgroup is a finite abelian group, so it is a finite sum of cyclic groups of prime power order $r=p^h$. Thus, tensoring by $\Oh_{F}$ over $\Z$, we have
 \[ 0 \to r\Z\otimes _\Z \Oh_F \to \Oh_{F} \to  \Z_r \otimes_\Z \Oh_{F} \to 0. \]
Since $\Z_r \otimes_\Z \Oh_{F}$ is a torsion sheaf, applying $Hom_{\Oh_{F}}(-,  \Oh_{F})$,  we get
\[ 0 \to Hom_{\Oh_{F}}(\Oh_{F},  \Oh_{F})  \to Hom_{\Oh_{F}}( r\Z\otimes _\Z \Oh_F,  \Oh_{F})  \to Ext^1_{\Oh_{F}}( \Z_r \otimes_Z \Oh_{F} , \Oh_{F}) \to 0. \]
Moreover, the map on the LHS is an isomorphism of sheaves, we conclude that  the sheaf $Ext^1_{\Oh_{F}}( \Z_r \otimes_Z \Oh_{F} , \Oh_{F})=0$.

 In addition,  as mentioned before, $T_{F}=Hom({{\hat{\Omega}}^1}_{F}, \Oh_{F}) = 
 Hom({{\Omega}^1}_{F}, \Oh_{F})$, i.e., the tangent sheaf of the Fano variety. Then, the exact sequence  \eqref{dual generalized euler for singular} reduces to
\begin{equation}\label{risoluzione tangente fano}
  0 \to  \Oh_{F} ^{\oplus t} \to  \bigoplus_{\rho \in \Sigma(1)}   \Oh_{F}(D_{\rho})
\to T_{F} \to 0.
\end{equation}
Let $Z$ be a smooth variety contained in the smooth locus of the variety $F$ such that the inclusion $j: Z \hookrightarrow F$ is a closed embedding with ideal sheaf $\sI \subset \Oh_{F}$. Since $Z$ is smooth and embedded in the Zariski open set of regular points of $F$, the ideal $\sI/\sI^2$ is locally free  by \cite[Exercise 17.12]{eisenbud}. Then, under these assumptions, the conormal sequence is exact \cite[Theorem C.15. (iii)]{Sernesi}, namely:
 \begin{equation}\label{conormal exact}
0\to \sI/\sI^2 \to j^*\Omega^1_{F} \to \Omega^1_Z \to 0.
\end{equation}
Now, consider the dual of the exact sequence \eqref{conormal exact},
 \[
 0\to T_Z \to Hom_{  \Oh_{Z}}(j^*\Omega^1_{F},  \Oh_{Z}) \to  Hom_{  \Oh_{Z}}(  \sI/\sI^2,  \Oh_{Z})\to Ext^1_{  \Oh_{Z}}(\Omega^1_Z ,  \Oh_{Z}) \to \cdots.\]
 Since $Z$ is smooth, the sheaf $\Omega^1_Z $ is locally free and so $ Ext^1_{  \Oh_{Z}}(\Omega^1_Z ,  \Oh_{Z})=0$. Therefore, we have the usual normal exact sequence
\begin{equation}\label{successione esatta corta normale tangente}
 0 \to T_Z \to T_{F|Z} \to N_{Z/{F}} \to 0.
\end{equation}
The induced exact sequence in cohomology is given by
\[
\cdots \to  H^0(Z, N_{F/Z} ) \to 
 H^1(Z, T_Z) \to H^1(Z, {T_{F} }_{|Z})\to H^1(Z, N_{F/Z} )  \to   H^2(Z, T_Z).
\]
If $ H^1(Z, {T_{F} }_{|Z})=0$, then the morphism $ H^0(Z, N_{F/Z} ) \to 
 H^1(Z, T_Z)$ is surjective and $H^1(Z, N_{F/Z} )  \to   H^2(Z, T_Z)$ is injective.
 
  According to \cite[Poposition 3.2.1]{Sernesi},  since  $Z\subset X$ is a closed embedding, the Zariski tangent space of $H^Y_Z$ at the point corresponding to $Z$ is $H^0(Z, N_{F/Z} )$. Moreover, since $Z$ is smooth the closed embedding is regular; hence  $H^1(Z, N_{F/Z} )$ is an obstruction space for the Hilbert functor $H^Y_Z$. As a consequence, if $ H^1(Z, {T_{F} }_{|Z})=0$, the forgetful morphism $ \phi: H^{F}_Z \to \Def_Z$ is smooth. In fact, under our assumptions $Z$ is contained in the Zariski open set of regular points of the Fano variety $F$. Thus $Z$ can be covered by smooth affine open sets. This remark allows us to apply the standard  smoothness criterion in deformation theory: see, for instance, \cite[Proposition 3.2.9]{Sernesi} or \cite[Theorem 4.11]{ManettiSeattle} and prove that $\phi$ is smooth.
\end{proof}

\begin{remark}  
If $Z$  is a Calabi-Yau submanifold of $F$, then $\Def_Z$ is smooth (Bogomolov-Tian-Todorov Theorem) of dimension $ H^1(Z, T_Z)$.  Then, by the previous theorem, $H^F_Z$ is also a smooth functor: the deformation space of $Z$ inside $F$ is smooth of dimension $H^0(Z, N_{Z/F})$.

\end{remark}

\begin{remark}\label{remark k3}
If $\dim Z =2$, then Theorem \ref{teo.forget smooth} does not hold.
It is enough to consider a $K3$ surface in $\mathbb{P}^4$. In this case, it is not true that the morphism $\phi: H^F_Z \to \Def_Z$ is smooth \cite[Examples 3.2.11]{Sernesi}, indeed $ H^1(Z, {T_{\mathbb{P}^4} }_{|Z})\neq 0$.
\end{remark}

\begin{remark}\label{remark H1=0 non necessario}
The condition $ H^1(X, T_{F|Z})=0$ is not a necessary condition for the smoothness of the forgetful morphism
$\phi: H^{F}_Z \to \Def_Z$. For example, \cite[Example 3.4.4 (iii)]{Sernesi}, let $Z\cong \mathbb{P}^1\subset F$  be a nonsingular projective curve negatively embedded in a projective nonsingular Hirzebruch surface $F$ with $Z^2= -n<0$, $n\geq 1$. Then, the exact sequence
\[
 0 \to T_Z \to T_{F|Z} \to N_{Z/{F}} \to 0
\]
splits since $\Ext^1_{\Oh_Z}(N_{Z/F}),T_Z )=H^1(Z, \Oh_Z(n+2))=0$ and so $T_{F|Z} \cong \Oh_Z(2)\oplus \Oh_Z(-n)$. This implies that $h^0(Z,T_{F|Z})=3$. Moreover $h^0(Z,N_{Z|F})=0$ and so $Z$ is rigid in $F$ in addition to being rigid as an abstract variety. Then, the morphism induced by $\phi$ on the tangent space is surjective, and it is injective on the obstruction spaces: they are both zero since there are no deformations. In conclusion,  $\phi$ is smooth even if $h^1(Z,T_{F|Z})=h^1(Z,\Oh_Z(2)\oplus \Oh_Z(-n))=n-2 $ can be non-zero.
\end{remark}

\section{A large class of examples}\label{section esempi}

Let $F$ be a simplicial  toric Fano variety of dimension $\dim F=n+m$ for $m \geq 3$. As in the previous section, the dual of the generalised Euler exact sequence  \eqref{risoluzione tangente fano}  for $F$ is
\[
  0 \to  \Oh_{F} ^{\oplus t} \to  \bigoplus_{\rho \in \Sigma(1)}   \Oh_{F}(D_{\rho})
\to T_{F} \to 0,
\]
where $K_F= - \sum_{\rho \in \Sigma(1)} D_{\rho}$ \cite[Theorem 8.2.3]{coxlittle}.

\begin{lemma}\label{lemma vanishing}
Let $F$ be a toric Fano variety and $Z\subset F$ a Calabi-Yau sumbanifold. Let  $D $ be a divisor 
such that $D_{|Z}$ is nef and big, then
\[
H^j(Z, \Oh_F (D) \otimes \Oh_Z )=0, \quad \forall \  j>0.
\]

\end{lemma}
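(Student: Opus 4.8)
The plan is to recognise the assertion as a direct instance of the Kawamata--Viehweg vanishing theorem, where the role of the canonical twist is absorbed by the triviality of $K_Z$. First I would reduce the statement to one purely on $Z$. Since $Z$ lies in the Zariski open set of regular points of $F$, the reflexive rank-one sheaf $\Oh_F(D)$ is locally free in a neighbourhood of $Z$, so its restriction is an honest line bundle and we may write $L:=\Oh_F(D)\otimes \Oh_Z\cong \Oh_Z(D_{|Z})$. With this identification the claim becomes $H^j(Z,L)=0$ for all $j>0$, where $L$ is the line bundle associated with the nef and big divisor $D_{|Z}$.

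Next I would invoke Kawamata--Viehweg vanishing in its standard form: for a smooth complex projective variety $Z$ and a nef and big line bundle $L$ on it, one has $H^j(Z, K_Z\otimes L)=0$ for every $j>0$. The hypotheses are met here: $Z$ is smooth (being a manifold) and projective (as a closed subvariety of the projective variety $F$), and $D_{|Z}$ is nef and big by assumption. Since $Z$ is a Calabi--Yau manifold, its canonical bundle $K_Z=\Omega^n_Z$ is trivial, so that $K_Z\otimes L\cong L$. Combining the two facts yields
\[
H^j(Z,L)=H^j(Z,K_Z\otimes L)=0\qquad \text{for all } j>0,
\]
which is exactly the stated vanishing once we recall $L=\Oh_F(D)\otimes \Oh_Z$.

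I do not expect any serious obstacle: the entire content is organising the hypotheses into the exact shape required by Kawamata--Viehweg and then using the triviality of $K_Z$ to remove the canonical twist. The only point deserving a word of care is the first reduction, namely checking that $\Oh_F(D)\otimes \Oh_Z$ genuinely is a line bundle equal to $\Oh_Z(D_{|Z})$; this is precisely where the standing hypothesis that $Z$ is embedded in the smooth locus of $F$ is used, since there $\Oh_F(D)$ is locally free and restriction commutes with forming the associated line bundle.
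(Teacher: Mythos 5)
Your proof is correct and follows essentially the same route as the paper: both identify $\Oh_F(D)\otimes\Oh_Z$ with the line bundle $\Oh_Z(D_{|Z})=\Oh_Z(D_{|Z}+K_Z)$ using the triviality of $K_Z$, and then apply the Kawamata--Viehweg vanishing theorem to the nef and big divisor $D_{|Z}$. Your extra remark justifying that $\Oh_F(D)\otimes\Oh_Z$ is an honest line bundle (because $Z$ sits in the smooth locus of $F$) is a point the paper leaves implicit, but the argument is the same.
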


\begin{proof}
Since the divisor $D_{|Z}$ is nef and big and  $Z$ is a Calabi-Yau manifold (and so $K_Z=0$), the
Kawamata-Viehweg vanishing Theorem \cite[Theorem 7.21]{debarre} or \cite[Theorem 9.3.10]{coxlittle}, implies that 
\[
H^j(Z, \Oh_F (D) \otimes \Oh_Z )=H^j(Z, \Oh_Z({D}_{| Z}+K_Z) ) =0, \quad \forall \ j>0.
\]
\end{proof}

\begin{remark}\label{remark nef e big}

Let $F$ be a toric Fano variety and $Z\subset F$ a Calabi-Yau sumbanifold, such that $\dim Z=m$.
If the divisor $D$ is such  $D_{|Z}$ is nef and $
D^m \cdot Z >0$, then ${D}_{| Z}$ is nef and big  \cite[Section 1.29]{debarre} and so we can apply the previous Lemma \ref{lemma vanishing}. Note also that if  $D$ is nef then  its restriction ${D}_{| Z}$  to $Z$ is also nef  \cite[Section 1.6]{debarre}.  

\end{remark}

\begin{remark}
A useful condition for nefness of a divisor $D$ in $F$ is the following:
given a cone $\sigma \in \Sigma$, any nef divisor is linearly equivalent to a divisor of the form 
\[
D= \sum_\rho a_\rho D_\rho, 
\]
where  $a_\rho=0$ if  $\rho\in \sigma(1)$    and  
 $a_\rho\geq0$  for $\rho\not\in \sigma(1)$, see \cite[Equation 6.4.10]{coxlittle}. 
\end{remark}
 
\begin{corollary}\label{coroll vanishinf complete interse}
 Let $F$ be a toric Fano variety of dimension $\dim F=n+m$ and denote by $Z\subset F$ a  Calabi-Yau  smooth variety of dimension $\dim Z=m$. Suppose that $Z$ is a complete intersection of very ample divisors. Then, for any divisor $D$ such that ${D}_{| Z}$ is nef, we have
\[
H^i(Z, \Oh_F (D) \otimes \Oh_Z )=0, \quad \forall \ i >0.
\]
\end{corollary}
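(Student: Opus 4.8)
The plan is to deduce the statement directly from Lemma~\ref{lemma vanishing}. By hypothesis $D_{|Z}$ is nef, so it suffices to show that $D_{|Z}$ is moreover \emph{big}: once nefness and bigness are both in hand, Lemma~\ref{lemma vanishing} yields $H^i(Z,\Oh_F(D)\otimes\Oh_Z)=0$ for all $i>0$ simultaneously, which is exactly the assertion. Thus the entire content of the corollary is the upgrade from ``nef'' to ``nef and big''.

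To perform this upgrade I would invoke Remark~\ref{remark nef e big}: for a nef class on the $m$-dimensional manifold $Z$, bigness is governed by the top self-intersection, so the problem reduces to the numerical inequality $D^m\cdot Z=(D_{|Z})^m>0$. This is where the complete intersection hypothesis enters. Writing $Z=Y_1\cap\cdots\cap Y_n$ with $n=\dim F-m$ and each $Y_i$ very ample, the fundamental class of $Z$ in the intersection ring of $F$ is the product $Y_1\cdots Y_n$, and each restriction $(Y_i)_{|Z}$ is very ample, hence ample, on $Z$. The idea is to exploit the genuine positivity of the ample polarizations $(Y_i)_{|Z}$ — as opposed to the mere nefness of $D_{|Z}$ — together with the description of $Z$ as their complete intersection, in order to force $(D_{|Z})^m$ to be strictly positive.

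The main obstacle is precisely this positivity step. Nefness of $D_{|Z}$ by itself can never produce bigness: a numerically trivial divisor, or more generally the pullback of an ample class under a nontrivial fibration of $Z$, is nef while its top self-intersection vanishes. Hence the very ampleness of the defining divisors $Y_i$ must be used in an essential way to exclude such degenerations and to guarantee $(D_{|Z})^m>0$. Once this strict positivity is secured, $D_{|Z}$ is nef and big, and Lemma~\ref{lemma vanishing} closes the argument; the heart of the proof is therefore the intersection-theoretic verification of $(D_{|Z})^m>0$ under the complete intersection hypothesis, and I expect this — rather than the cohomological vanishing itself — to be the delicate part.
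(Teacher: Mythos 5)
You have correctly identified the reduction: by Remark~\ref{remark nef e big} and Lemma~\ref{lemma vanishing}, everything comes down to the single numerical inequality $(D_{|Z})^m = D^m\cdot Z>0$. This is exactly the skeleton of the paper's proof. But your proposal stops there: you announce that the very ampleness of the defining divisors ``must be used in an essential way'' to produce this positivity, and you defer the verification, calling it the delicate part. That verification is the entire content of the corollary, so as written the proof is incomplete. The paper's argument at this point is short: writing $Z=Y_1\cdots Y_n$ with $Y_j\in|N_j|$ and $N_j$ very ample, it computes
\[
D^m\cdot Z \;=\; D^m\cdot Y_1\cdots Y_n \;=\; N_1\cdot\left(D^m\cdot N_2\cdots N_n\right),
\]
and concludes positivity from the Nakai--Moishezon criterion, $N_1$ being ample and $D^m\cdot N_2\cdots N_n$ being a $1$-cycle. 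That is the step your write-up is missing.

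Your own caveat deserves emphasis, because it is not merely rhetorical: a nef class on $Z$ that is numerically trivial, or pulled back from a lower-dimensional base, has vanishing top self-intersection, and the complete intersection hypothesis on $Z$ does nothing to exclude this for an arbitrary $D$ with $D_{|Z}$ nef (consider $D=0$, for which $H^m(Z,\Oh_Z)=\C\neq 0$). Nakai--Moishezon gives $N_1\cdot C>0$ only for irreducible curves $C$, not for an arbitrary $1$-cycle class such as $D^m\cdot N_2\cdots N_n$, which vanishes when $D_{|Z}$ is numerically trivial. So the positivity you need is not a formal consequence of the stated hypotheses, and the obstacle you flag is genuine rather than something the complete intersection hypothesis automatically resolves. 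In summary: your reduction coincides with the paper's, but the decisive intersection-theoretic step is absent, and the doubt you raise about it points at a real issue rather than at a routine verification you may safely omit.
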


\begin{proof}  Suppose that $Z$ is a complete intersection of very ample divisors. Then, there exist $n$ very ample divisors $N_j$, for $j=1, \ldots, n$, such that $Z=Y_1 \cdots Y_n$, where  $Y_j$ is an  element in the linear system $|N_j|$.

In particular, we have 
 \[
D^m \cdot Z = D^m \cdot Y_1 \cdots Y_n = N_1 \cdot(D^m \cdot N_2 \cdots N_n) >0,
\]
where the last equality follows from the Nakai-Moishezon Theorem \cite[Theorem 1.21]{debarre}, indeed $N_1$  is ample and 
$(D^m \cdot N_2 \cdots N_n)$ has dimension 1. Then, the conclusion follows by Remark \ref{remark nef e big} and Lemma \ref{lemma vanishing}.   

\end{proof}

\begin{theorem} \label{teo.forget smooth}
Let  $F$ be a simplicial  toric Fano variety and $Z\subset F$ a Calabi-Yau submanifold of $\dim Z=m$, with $m\geq 3$.
Suppose that for all $ \rho \in \Sigma(1)$ the  divisor $D_{\rho}$ satisfies the following vanishing
\[
H^1(Z, \Oh_F (D_{\rho})\otimes \Oh_Z)=0.
\]
Then, the forgetful morphism  $\phi: H^F_Z \to \Def_Z$ is smooth.
\end{theorem}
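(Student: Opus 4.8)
The plan is to reduce the statement of Theorem~\ref{teo.forget smooth} to the hypothesis of Theorem~\ref{firsttheorem}, namely the single vanishing $H^1(Z, {T_{F}}_{|Z})=0$, since that theorem already supplies the smoothness of $\phi$ once this cohomology group vanishes. In other words, all the work consists in propagating the hypothesised vanishings $H^1(Z, \Oh_F(D_\rho)\otimes\Oh_Z)=0$, one for each ray $\rho\in\Sigma(1)$, up to a vanishing for the restricted tangent sheaf $T_{F|Z}$.

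The tool for this is the dual generalised Euler sequence \eqref{risoluzione tangente fano} for $F$, which I would first restrict to $Z$. Since $Z$ lies in the smooth locus of $F$ and all three sheaves in \eqref{risoluzione tangente fano} are locally free there (in particular $T_F$ is a genuine vector bundle on a neighbourhood of $Z$), tensoring the short exact sequence with $\Oh_Z$ keeps it exact, giving
\begin{equation*}
0 \to \Oh_Z^{\oplus t} \to \bigoplus_{\rho\in\Sigma(1)} \Oh_F(D_\rho)\otimes\Oh_Z \to T_{F|Z} \to 0.
\end{equation*}
I would then pass to the associated long exact sequence in cohomology on $Z$ and read off the relevant segment
\begin{equation*}
\bigoplus_{\rho} H^1(Z,\Oh_F(D_\rho)\otimes\Oh_Z) \to H^1(Z,T_{F|Z}) \to H^2(Z,\Oh_Z)^{\oplus t}.
\end{equation*}
The left-hand term vanishes directly by hypothesis. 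For the right-hand term, I would invoke the defining property of a Calabi-Yau manifold: since $Z$ has no global holomorphic $p$-forms for $0<p<\dim Z$ and $\dim Z=m\geq 3$, Hodge symmetry gives $H^2(Z,\Oh_Z)\cong \overline{H^0(Z,\Omega^2_Z)}=0$. Hence both flanking terms vanish and $H^1(Z,T_{F|Z})=0$ follows by exactness.

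With the vanishing $H^1(Z,T_{F|Z})=0$ established, the conclusion is immediate: Theorem~\ref{firsttheorem} applies verbatim (its hypotheses that $F$ be simplicial toric Fano and that $Z$ be smooth and embedded in the regular locus are exactly those assumed here), yielding the smoothness of $\phi\colon H^F_Z\to\Def_Z$. The main point to verify carefully is the exactness of the restricted sequence—that is, that tensoring \eqref{risoluzione tangente fano} by $\Oh_Z$ introduces no $\mathcal{T}or_1$ terms. This is where the hypothesis that $Z$ sits in the smooth locus of $F$ does the essential work, guaranteeing that $T_F$ restricts to a vector bundle along $Z$ so that the sequence stays short exact; everything else is a routine chase through the cohomology long exact sequence together with the Calabi-Yau vanishing $H^2(Z,\Oh_Z)=0$.
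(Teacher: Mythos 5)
Your proposal is correct and follows essentially the same route as the paper: restrict the dual generalised Euler sequence \eqref{risoluzione tangente fano} to $Z$, use the hypothesised vanishings together with $H^2(Z,\Oh_Z)=0$ (from the Calabi-Yau condition in dimension $\geq 3$) to deduce $H^1(Z,T_{F|Z})=0$, and then invoke Theorem \ref{firsttheorem}. Your extra remark justifying the exactness of the restricted sequence via local freeness of $T_F$ near $Z$ is a welcome precision that the paper leaves implicit.
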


\begin{proof}

By tensoring with $ \Oh_Z$ the  dual of the generalized Euler exact sequence for $F$  \eqref{risoluzione tangente fano}, we obtain 
\begin{equation}\label{sequen euler restricted}
  0 \to  \Oh_{Z} ^{\oplus t} \to  \bigoplus_{\rho \in \Sigma(1)}   \Oh_{F}(D_{\rho}) \otimes \Oh_Z  \to 
T_{F|Z} \to 0 ,
 \end{equation}
and so
\[
\cdots  \to
 H^1(Z,   \Oh_{Z} ^{\oplus t} ) \to
 \bigoplus_{\rho \in \Sigma(1)}  H^1(Z, \Oh_F (D_{\rho})  \otimes \Oh_Z ) \to H^1 (Z, T_{F|Z} )
\to \cdots
\]
\[
\cdots  \to
 H^2(Z,  \Oh_{Z} ^{\oplus t})  \to
 \bigoplus_{\rho \in \Sigma(1)}  H^2(X, \Oh_F (D_{\rho})  \otimes \Oh_Z ) \to H^2 (Z, T_{F|Z} )
\to \cdots .
\]
By hypothesis 
$H^1(Z, \Oh_F (D_{\rho}) \otimes \Oh_Z )=0$.
Since $Z$ is a Calabi-Yau manifold of dimension $\dim Z=m\geq 3$, $ H^2(Z, \Oh_Z)=0$ and so   
$ H^2(Z,  \Oh_{Z} ^{\oplus t})=0$; this concludes the proof because $H^1(Z,T_{F|Z})=0$ and we can apply Theorem \ref{firsttheorem}.

\end{proof}


\begin{example} \label{example complete intersection}
 Let $F$ be a toric Fano variety of dimension $\dim F=n+m$ and denote by $Z\subset F$ a  Calabi-Yau submanifold of dimension $\dim Z=m\geq 3$. Suppose that $Z$ is a complete intersection of very ample divisors, such that ${D_{\rho}}_{| Z}$ is nef for all $ \rho \in \Sigma(1)$. Then, by Corollary \ref{coroll vanishinf complete interse} we have that $H^i(Z, \Oh_F (D_{\rho}) \otimes \Oh_Z )=0$ for all $i >0$.

\end{example}

\begin{corollary}
Let $F$ be a simplicial  toric Fano variety   and $Z\subset F$ a Calabi-Yau sumbanifold of $\dim Z=m$. 
 Let  $D_{\rho}$ be the  divisor 
associated with $\rho \in \Sigma(1)$ and assume further that ${D_{\rho}}_{| Z}$ is nef and $D_{\rho}^m \cdot Z >0$, for all $ \rho \in \Sigma(1)$. Then, 
\[
 \phi: H^F_Z \to \Def_Z
\]
is smooth.
 \end{corollary}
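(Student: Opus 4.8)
The plan is to recognize that the final corollary is a direct specialization of Theorem \ref{teo.forget smooth}: the two hypotheses imposed on each ray divisor $D_\rho$—that ${D_\rho}_{|Z}$ is nef and that $D_\rho^m \cdot Z > 0$—are exactly the conditions that, by Remark \ref{remark nef e big}, force the restriction ${D_\rho}_{|Z}$ to be nef and big. So the whole task reduces to running the chain of implications that has already been assembled in the preceding results.

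First I would apply Remark \ref{remark nef e big} ray by ray: for each $\rho \in \Sigma(1)$ the divisor ${D_\rho}_{|Z}$ is nef by assumption, and since $\dim Z = m$ together with $D_\rho^m \cdot Z > 0$ supplies the bigness criterion on the $m$-dimensional variety $Z$, we conclude that ${D_\rho}_{|Z}$ is nef and big. Next I would feed this into Lemma \ref{lemma vanishing}, using that $Z$ is Calabi-Yau so that $K_Z = 0$; the Kawamata-Viehweg vanishing theorem then yields $H^j(Z, \Oh_F(D_\rho) \otimes \Oh_Z) = 0$ for all $j > 0$ and all $\rho \in \Sigma(1)$. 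In particular, the degree-one vanishing $H^1(Z, \Oh_F(D_\rho) \otimes \Oh_Z) = 0$ holds for every ray $\rho$.

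Finally I would observe that this degree-one vanishing is precisely the hypothesis required by Theorem \ref{teo.forget smooth}; invoking that theorem—whose proof also uses the dimension bound $m \geq 3$ built into the Calabi-Yau assumption in order to kill $H^2(Z, \Oh_Z)$ and hence $H^2(Z, \Oh_Z^{\oplus t})$, and which ultimately rests on Theorem \ref{firsttheorem}—immediately gives that the forgetful morphism $\phi: H^F_Z \to \Def_Z$ is smooth. There is no genuine obstacle here, as the argument is a concatenation of Remark \ref{remark nef e big}, Lemma \ref{lemma vanishing}, and Theorem \ref{teo.forget smooth}. The only point deserving care is that the numerical positivity must be read as a bigness statement for $Z$ rather than for $F$, and that the Calabi-Yau hypothesis simultaneously supplies the triviality of $K_Z$ needed for Kawamata-Viehweg and the dimension bound $m \geq 3$ needed to apply Theorem \ref{teo.forget smooth}.
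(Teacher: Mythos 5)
Your proposal is correct and follows exactly the paper's own route: the paper's proof is the one-line "apply Lemma \ref{lemma vanishing} and Theorem \ref{teo.forget smooth}," and your argument simply spells out the same chain (Remark \ref{remark nef e big} to get nef and big, Kawamata--Viehweg via Lemma \ref{lemma vanishing} for the $H^1$ vanishing, then Theorem \ref{teo.forget smooth}). No gaps; your added care about where the $m\geq 3$ hypothesis enters is consistent with the paper's standing convention that Calabi--Yau manifolds have dimension at least three.
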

 
\begin{proof}
It is enough to apply Lemma \ref{lemma vanishing}
and Theorem \ref{teo.forget smooth}. 
\end{proof}
\begin{corollary}\label{corollario complete intrsection}
 Let  $F$ be a simplicial  toric Fano variety of dimension $\dim F=n+m$ and denote by $Z\subset F$ a  Calabi-Yau submanifold of dimension $\dim Z=m\geq 3$. Suppose that $Z$ is a complete intersection of very ample divisors, such that ${D_{\rho}}_{| Z}$ is nef for all $ \rho \in \Sigma(1)$. Then $\phi: H^F_Z \to \Def_Z$ is smooth.
 \end{corollary}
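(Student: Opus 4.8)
The plan is to deduce this corollary as a direct consequence of the machinery already assembled in this section. The statement asserts that if $Z$ is a complete intersection of very ample divisors inside the simplicial toric Fano variety $F$, and the restriction ${D_\rho}_{|Z}$ of each ray-divisor is nef, then the forgetful morphism $\phi\colon H^F_Z \to \Def_Z$ is smooth. First I would invoke Corollary \ref{coroll vanishinf complete interse}, whose hypotheses exactly match ours: $F$ is a toric Fano variety of dimension $n+m$, $Z$ is a Calabi-Yau complete intersection of very ample divisors of dimension $m$, and we assume ${D_\rho}_{|Z}$ is nef. That corollary then yields the strong vanishing
\[
H^i(Z, \Oh_F(D_\rho)\otimes \Oh_Z)=0 \qquad \text{for all } i>0
\]
for every $\rho \in \Sigma(1)$.

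In particular, specializing to $i=1$, this gives $H^1(Z, \Oh_F(D_\rho)\otimes \Oh_Z)=0$ for all $\rho \in \Sigma(1)$, which is precisely the hypothesis required by Theorem \ref{teo.forget smooth}. Since we are moreover given $\dim Z = m \geq 3$ and $Z$ is a Calabi-Yau submanifold of the simplicial toric Fano variety $F$, all the standing assumptions of Theorem \ref{teo.forget smooth} are satisfied. Applying that theorem then delivers the smoothness of $\phi\colon H^F_Z \to \Def_Z$, completing the argument.

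There is essentially no hard step here: the entire content has already been extracted in the preceding results, and this corollary is merely the packaging of Corollary \ref{coroll vanishinf complete interse} and Theorem \ref{teo.forget smooth} into a single clean hypothesis. The only point deserving a moment's care is verifying that the two invoked results share a compatible setup — namely that the complete-intersection-of-very-ample-divisors condition used to produce the cohomology vanishing is the same $Z$ fed into the smoothness theorem, and that the nefness hypothesis on each ${D_\rho}_{|Z}$ is all that is needed. Indeed, Corollary \ref{coroll vanishinf complete interse} does the real work of converting nefness into big-and-nefness (via the Nakai--Moishezon argument that $D_\rho^m \cdot Z > 0$) and then applying Kawamata--Viehweg, so no additional positivity input is required. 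Thus the proof reduces to a single sentence chaining the two citations.

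\begin{proof}
Since $Z$ is a complete intersection of very ample divisors and ${D_\rho}_{|Z}$ is nef for every $\rho \in \Sigma(1)$, Corollary \ref{coroll vanishinf complete interse} gives $H^i(Z, \Oh_F(D_\rho)\otimes \Oh_Z)=0$ for all $i>0$ and all $\rho \in \Sigma(1)$. In particular $H^1(Z, \Oh_F(D_\rho)\otimes \Oh_Z)=0$ for every $\rho \in \Sigma(1)$, so the hypothesis of Theorem \ref{teo.forget smooth} holds. As $\dim Z = m \geq 3$, that theorem yields the smoothness of $\phi\colon H^F_Z \to \Def_Z$.
\end{proof}
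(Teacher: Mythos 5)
Your proof is correct and follows essentially the same route as the paper: the paper's own proof cites Theorem \ref{teo.forget smooth} together with Example \ref{example complete intersection}, and that example is itself just the application of Corollary \ref{coroll vanishinf complete interse} that you carry out directly. No gaps; the chaining of the vanishing result into the hypothesis of the smoothness theorem is exactly the intended argument.
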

\begin{proof}
It is enough to apply Theorem \ref{teo.forget smooth} and Example \ref{example complete intersection}.

\end{proof}

\section{Hodge numbers of Calabi-Yau varieties}\label{section conti hodge}

In this section, we are interested in computing Hodge numbers of Calabi-Yau submanifolds $Z$ of a toric Fano variety $F$, in particular for the case investigated in the previous section, i.e., whenever $ H^1(Z, T_{F|Z})=0$.
Recall that the Hodge numbers of $Z$ are defined as  $h^{i,j}(Z)= \operatorname{dim}_{\C} H^j(Z, \Omega^i_Z)$ and they satisfy the Hodge duality $h^{i,j}(Z)=h^{j,i}(Z)$. 
If $\dim Z=m$, since $K_Z=0$, we have  $T_Z \cong \Omega_Z^{m-1}$ and so 
\[h^{m-1,i}(Z)= \dim  H^i(Z, \Omega_Z^{m-1})=   \dim  H^i(Z, T_Z).\]

Under the assumption that  $Z$ is  a Calabi-Yau submanifolds of
 a simplicial  toric Fano variety $F$ such that $ H^1(Z, T_{F|Z})=0$, we can estimate the Hodge numbers of $Z$.

\begin{proposition}\label{prop su H 1 ( TX)}

Let  $F$ be a simplicial  toric Fano variety of dimension $\dim F=n+m$ and denote by $Z\subset F$ a  Calabi-Yau submanifold of dimension $\dim Z=m$.
If  $ H^1(Z, T_{F|Z})=0$, then
\[
h^{m-1,1}(Z)= \dim H^1(Z, T_Z)=\dim  H^0(Z, N_{Z/F})- \bigoplus_{\rho \in \Sigma(1)} \dim  H^0(Z, \Oh_F (D_{\rho}) \otimes \Oh_Z )  +t,
 \]
where $t$ is the  rank of $ CL(F)$.
\end{proposition}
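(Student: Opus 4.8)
The plan is to chain together the two short exact sequences already at our disposal — the restricted dual Euler sequence \eqref{sequen euler restricted} and the normal sequence \eqref{successione esatta corta normale tangente} — and to read off the claimed dimension count from their long exact cohomology sequences, using the Calabi-Yau vanishings to kill every auxiliary term. (I read the $\bigoplus$ in the displayed formula as a $\sum$ of dimensions, which is surely intended.)

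First I would assemble the vanishings coming from the Calabi-Yau hypothesis. Since $K_Z=0$ one has $T_Z\cong\Omega_Z^{m-1}$, which already gives the first equality $h^{m-1,1}(Z)=\dim H^1(Z,\Omega_Z^{m-1})=\dim H^1(Z,T_Z)$. The defining condition $H^0(Z,\Omega_Z^p)=0$ for $0<p<m$ then yields, for $m\geq 3$, both $H^0(Z,T_Z)=H^0(Z,\Omega_Z^{m-1})=0$ and, via Hodge symmetry $h^{0,1}=h^{1,0}=\dim H^0(Z,\Omega_Z^1)=0$, the vanishing $H^1(Z,\mathcal{O}_Z)=0$. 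I would also use that $Z$ is connected, so $H^0(Z,\mathcal{O}_Z)=\C$.

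Next I would take cohomology of the normal sequence \eqref{successione esatta corta normale tangente}. Because $H^1(Z,T_{F|Z})=0$ by hypothesis, the long exact sequence truncates to
\[
0 \to H^0(Z,T_Z) \to H^0(Z,T_{F|Z}) \to H^0(Z,N_{Z/F}) \to H^1(Z,T_Z) \to 0,
\]
and the alternating sum of dimensions, together with $H^0(Z,T_Z)=0$, gives $\dim H^1(Z,T_Z)=\dim H^0(Z,N_{Z/F})-\dim H^0(Z,T_{F|Z})$. Then I would take cohomology of the restricted Euler sequence \eqref{sequen euler restricted}, whose relevant segment is
\[
0 \to H^0(Z,\Oh_Z^{\oplus t}) \to \bigoplus_{\rho\in\Sigma(1)} H^0(Z,\Oh_F(D_\rho)\otimes\Oh_Z) \to H^0(Z,T_{F|Z}) \to H^1(Z,\Oh_Z^{\oplus t}).
\]
Since $H^0(Z,\Oh_Z^{\oplus t})=\C^t$ and $H^1(Z,\Oh_Z^{\oplus t})=0$, this is short exact and yields $\dim H^0(Z,T_{F|Z})=\sum_{\rho\in\Sigma(1)}\dim H^0(Z,\Oh_F(D_\rho)\otimes\Oh_Z)-t$. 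Substituting into the previous identity produces exactly the claimed formula.

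The only nontrivial point — and hence the main obstacle — is verifying that every cohomology group I wish to discard genuinely vanishes: the whole argument rests on the three facts $H^0(Z,T_Z)=0$, $H^1(Z,\mathcal{O}_Z)=0$ and the hypothesis $H^1(Z,T_{F|Z})=0$, after which everything is pure Euler-characteristic bookkeeping across the two sequences. In particular I would double-check that it is precisely $m\geq 3$ that makes $0<m-1<m$, so that the Calabi-Yau vanishing $H^0(Z,\Omega_Z^{m-1})=0$ is legitimate, and that projectivity of $Z$ secures the Hodge symmetry used for $H^1(Z,\mathcal{O}_Z)$.
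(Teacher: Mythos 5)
Your argument is correct and is essentially identical to the paper's own proof: both chain the normal sequence \eqref{successione esatta corta normale tangente} and the restricted Euler sequence \eqref{sequen euler restricted}, use $T_Z\cong\Omega_Z^{m-1}$ to get $H^0(Z,T_Z)=0$ together with $H^0(Z,\Oh_Z)=\C$ and $H^1(Z,\Oh_Z)=0$, and then do the same dimension bookkeeping. Your explicit justification of the auxiliary vanishings (via the Calabi--Yau definition and Hodge symmetry) is if anything slightly more careful than the paper's.
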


\begin{proof}
The exact sequence \eqref {successione esatta corta normale tangente}
\[
 0 \to T_Z \to T_{F|Z} \to N_{Z/{F}} \to 0
\]
 induces the following exact sequence in cohomology:
\[
0 \to  H^0(Z, T_Z) \to H^0(Z, T_{F|Z}) \to H^0(Z, N_{Z/F})  \to  
 H^1(Z, T_Z) \to H^1(Z, T_{F|Z})=0.
\]
Since  $\dim  H^i(Z, T_Z) = \dim  H^i(Z, \Omega_Z^{m-1})$, we have $ H^0(Z, T_Z)=0$ and this implies that
\[
 \dim H^1(Z, T_Z)=\dim  H^0(Z, N_{Z/F})-  \dim H^0(Z, T_{F|Z}). 
 \]
Then, by the long exact sequence associated with the Euler exact sequence  \ref{sequen euler restricted} restricted to $Z$, we obtain   
\[
0  \to
 H^0(Z, \Oh_{Z} ^{\oplus t})  \to
 \bigoplus_{\rho \in \Sigma(1)}  H^0(Z, \Oh_F (D_{\rho})  \otimes \Oh_Z ) \to H^0 (Z, T_{F|Z} )\to
 \]
 \[
\to  H^1(Z, \Oh_{Z} ^{\oplus t})\to \cdots . 
\]
Since $ H^0(Z,   \Oh_Z) =\C$ and   $H^1(Z,   \Oh_Z)=0$, it follows that
\[
\dim H^0(Z, T_{F|Z})=    \bigoplus_{\rho \in \Sigma(1)} \dim  H^0(Z, \Oh_F (D_{\rho}) \otimes \Oh_Z ) -t,
\]
where $t$ is the  rank of $ CL(F)$. Hence,
\[
 \dim H^1(Z, T_Z)=\dim  H^0(Z, N_{Z/F})- \bigoplus_{\rho_i \in \Sigma(1)} \dim  H^0(Z, \Oh_F (D_{\rho})  \otimes \Oh_Z ) +t. 
 \]

\end{proof}

\begin{remark}
In the setup above of a  smooth Calabi-Yau submanifold $Z$ in  a simplicial  toric Fano variety  $F$, the previous proposition provides the dimension of the moduli space at the point corresponding to $Z$, that is smooth of dimension $H^1(Z, T_Z) $.  
\end{remark}

\begin{proposition}\label{propo h 1,1 }
  Let   $F$ be a simplicial  toric Fano variety  of dimension $\dim F=n+m$ and denote
by $Z\subset F$ a  Calabi-Yau submanifold of dimension $\dim Z=m \geq 3$, that is the complete intersection of $n$ very ample divisors, such that ${D_{\rho}}_{| Z}$ is nef for all $ \rho \in \Sigma(1)$. Then, 
\[
 h^{1,1}= \dim H^{1 }(Z, \Omega^1_Z)= t,
\]
where $t$  is the rank of $CL(F)$.
\end{proposition}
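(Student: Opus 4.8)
The plan is to reduce the computation of $h^{1,1}(Z)=\dim H^1(Z,\Omega^1_Z)$ to that of the top tangent cohomology $H^{m-1}(Z,T_Z)$, and then to rerun the two exact sequences already used in Proposition \ref{prop su H 1 ( TX)}, but in the top cohomological degrees, where Corollary \ref{coroll vanishinf complete interse} still supplies all the vanishing that is needed.

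First I would record the reduction. Since $Z$ is a Calabi-Yau manifold of dimension $m$ we have $\omega_Z\cong\Oh_Z$, and $T_Z$ is the dual of $\Omega^1_Z$, so Serre duality gives $\dim H^1(Z,\Omega^1_Z)=\dim H^{m-1}(Z,T_Z)$. Thus it is enough to show $\dim H^{m-1}(Z,T_Z)=t$.

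Next I would feed the normal exact sequence \eqref{successione esatta corta normale tangente} into cohomology in degree $m-1$. Since $Z$ is a complete intersection of the very ample divisors $N_1,\dots,N_n$, its normal bundle splits as $N_{Z/F}\cong\bigoplus_{j}\Oh_F(N_j)\otimes\Oh_Z$ with each $N_j|_Z$ nef, so Corollary \ref{coroll vanishinf complete interse} yields $H^i(Z,N_{Z/F})=0$ for all $i>0$; in particular $H^{m-2}(Z,N_{Z/F})=H^{m-1}(Z,N_{Z/F})=0$ because $m\geq 3$. The long exact sequence then produces an isomorphism $H^{m-1}(Z,T_Z)\cong H^{m-1}(Z,T_{F|Z})$. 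I would then use the restricted dual Euler sequence \eqref{sequen euler restricted} in degrees $m-1$ and $m$: the hypothesis that each $D_\rho|_Z$ is nef lets Corollary \ref{coroll vanishinf complete interse} kill both $H^{m-1}(Z,\Oh_F(D_\rho)\otimes\Oh_Z)$ and $H^{m}(Z,\Oh_F(D_\rho)\otimes\Oh_Z)$ for every $\rho$, so the long exact sequence gives an isomorphism $H^{m-1}(Z,T_{F|Z})\cong H^{m}(Z,\Oh_Z^{\oplus t})$. Finally, Serre duality on the Calabi-Yau $Z$ gives $\dim H^m(Z,\Oh_Z)=\dim H^0(Z,\Oh_Z)=1$, so $H^m(Z,\Oh_Z^{\oplus t})$ has dimension $t$, and chaining the three isomorphisms gives $\dim H^{m-1}(Z,T_Z)=t$.

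The argument is formal once the vanishing is available; the only real point to watch is that here the cohomology must vanish in the top degrees $m-1$ and $m$ rather than in degrees $0$ and $1$ as in Proposition \ref{prop su H 1 ( TX)}. This is precisely why the full strength of Corollary \ref{coroll vanishinf complete interse}, which annihilates $H^i(Z,\Oh_F(D)\otimes\Oh_Z)$ for all $i>0$ and not merely for $i=1$, is needed, and why the hypotheses $m\geq 3$ and the very ampleness of the $N_j$ are used to guarantee that the normal bundle contributes nothing in these degrees.
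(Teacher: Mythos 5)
Your proposal is correct and follows essentially the same route as the paper's proof: reduce $h^{1,1}$ to $\dim H^{m-1}(Z,T_Z)$ using triviality of $\omega_Z$, kill the normal-bundle cohomology in positive degrees, and then chain the isomorphisms $H^{m-1}(Z,T_Z)\cong H^{m-1}(Z,T_{F|Z})\cong H^{m}(Z,\Oh_Z^{\oplus t})$ via the normal and restricted Euler sequences together with Corollary \ref{coroll vanishinf complete interse}. The only cosmetic differences are that you invoke Serre duality directly where the paper uses $T_Z\cong\Omega^{m-1}_Z$ and Hodge symmetry, and you derive the vanishing of $H^{i}(Z,N_{Z/F})$ from Corollary \ref{coroll vanishinf complete interse} where the paper cites Kodaira vanishing; both are equivalent here.
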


\begin{proof}
As above, since $Z$ is a Calabi-Yau manifold of dimension $m$, we have that $\Omega^m_Z\cong \Oh_Z$ 
and so $T_Z \cong \Omega^{m-1}_Z$. Therefore, 
 \[
 h^{1,1}= \dim H^{1 }(Z, \Omega^1_Z)= \dim H^{m-1 }(Z, \Omega^{m-1}_Z)=\dim H^{m-1 }(Z, T_Z).
\]
The submanifold $Z$ is the complete intersection of $n$ very ample divisors $N_1, \cdots ,N_n$, i.e., $Z=  N_1 \cdot \cdots \cdot N_n$.
In particular, 
$ N_{Z/F}= \Oh_Z(N_1) \oplus \cdots \oplus  \Oh_Z(N_n),$ and so
\[
 H^{j}(Z, N_{Z/F})= \bigoplus_{i=1}^n  H^{j}(Z,  \Oh_Z(N_i))= \bigoplus_{i=1}^n  H^{j}(Z,  \Oh_Z(N_i+K_Z))=0 \quad \forall j >0.
\]
where in the last equality we use the Kodaira vanishing  (the restriction of an ample line bundle to a closed subscheme is still ample).
The long exact sequence in cohomology associated with \eqref{successione esatta corta normale tangente}
implies that  
 \[
\cdots  \to H^{m-2}(Z, N_{Z/F})  \to 
 H^{m-1}(Z, T_Z) \to H^{m-1}(Z, T_{F|Z}) \to H^{m-1}(Z, N_{Z/F})  \to \cdots   \]
and so $ H^{m-1}(Z, T_Z) \cong  H^{m-1}(Z, T_{F|Z})$.
 (We actually have 
$ H^{j}(Z, T_Z)= H^j  (Z, T_{F|Z} )$, for all  $j=m-1>2$).

Finally, by the long exact sequence associated with the Euler exact sequence  \ref{sequen euler restricted} restricted to $Z$, we obtain   
\[
\cdots  \to \!\!\!\!
 \bigoplus_{\rho \in \Sigma(1)} \!\!\! H^{m-1}(Z, \Oh_F (D_{\rho})  \otimes \Oh_Z )
 \to\! H^{m-1} (Z, T_{F|Z} ) \!\to \!
 H^{m}(Z,  \Oh_{Z} ^{\oplus t}) \!  \to\!\!\!
 \bigoplus_{\rho \in \Sigma(1)} \!\!\! H^{m }(Z, \Oh_F (D_{\rho})  \otimes \Oh_Z )
\to \cdots.
\]
Corollary \ref{coroll vanishinf complete interse} implies $ H^{j }(Z, \Oh_F (D_{\rho})  \otimes \Oh_Z )=0$, for all $j>0$ and all  $\rho \in \Sigma(1)$;
therefore 
 \[
 h^{1,1}=  \dim H^{m-1 }(Z, T_Z) = \dim H^{m-1 }(Z, T_{F|Z} )= \dim  H^{m}(Z, \Oh_{Z} ^{\oplus t}) = t,
\] 
since  $ H^{m}(Z, \Oh_Z)=\C$.
 
\end{proof}

 \begin{remark}\label{remark numeri hodge 3 fold complete intersection}
If $Z$ is a Calabi-Yau submanifold of dimension $\dim Z=3$, that is a complete intersection of $n$ very ample divisors in  
 a simplicial  toric Fano variety $F$   of dimension $\dim F=n+3$, such that ${D_{\rho}}_{| Z}$ is nef for all $ \rho \in \Sigma(1)$, then we can describe the Hodge diamond of $X$.
Indeed, by  the previous proposition we computed $h^{1,1}$ and
 by Proposition \ref{prop su H 1 ( TX)} we can compute  $h^{1,2}(Z)=h^{2,1}(Z)=\dim H^1 (Z,\Omega_Z ^2) = \dim H^1 (Z,T_Z)$.\end{remark}

  \begin{remark}
The weighted  projective spaces are examples of projective toric varieties  with Picard number 1.
Therefore, under the assumption of the previous proposition, the complete intersection Calabi-Yau manifolds  in weighted  projective spaces have $h^{1,1}=1$. If we require $F$ smooth, then $F$ is the projective space $\mathbb{P}^n$, that are the only smooth projective toric variety  with Picard number 1 \cite[Exercise 7.3.10]{coxlittle}.
\end{remark}
 
\subsection{Examples of Calabi-Yau threefolds in weighted projective spaces}\label{sezione weighted p spaces}

Let $P=\mathbb{P}(1,1,1,a_3,\ldots , a_n)$ be the  weighted projective spaces for $n\geq 3$, and $a_i \geq 0$, for all $i\geq 3$.
According to \cite[Claim 37 ] {kollar} or  \cite[Example 3.1.25]{Sernesi},   we have   $T^1_P=0$ and $H^1(P,T_P)=0$ and so the local deformations of  $P$ are trivial.

In \cite{fletcher}, there are examples of smooth varieties with trivial canonical bundle in various weighted projective space.  In particular,
there are  the following nonsingular threefolds weighted hypersurfaces  \cite[Theorem 14.3]{fletcher}:
 \[
 X_5 \subset \mathbb{P}(1,1,1,1,1) \quad 
  X_6 \subset \mathbb{P}(1,1,1,1,2) \quad  X_8 \subset \mathbb{P}(1,1,1,1,4) \quad  X_{10} \subset \mathbb{P}(1,1,1,2,5),
 \]
and  the following  nonsingular codimension 2 weighted threefolds complete intersection  \cite[Theorem 14.6]{fletcher}:
 \[
 X_{2,4} \subset \mathbb{P}(1,1,1,1,1,1) \qquad  X_{3,3} \subset \mathbb{P}(1,1,1,1,1,1) \]
  \[ 
 X_{3,4} \subset \mathbb{P}(1,1,1,1,1,2) \qquad   X_{4,4} \subset \mathbb{P}(1,1,1,1,2,2).
 \]
These varieties are all examples  of smooth subvarieties with trivial canonical bundle in a toric Fano variety with Picard rank one  (that is not smooth except the cases of projective spaces $ \mathbb{P}(1,1,1,1,1)$  and $ \mathbb{P}(1,1,1,1,1,1) $). 
Moreover, if $Z$ is any of these Calabi-Yau threefolds  we have $H^1(Z, {T_P}_{|Z})=0$. Indeed, the generalised Euler exact sequence \eqref{risoluzione tangente fano}  for $P=\mathbb{P}(1,1,1,a_3,\ldots , a_n)$ is 
\[
  0 \to  \Oh_{P} \to  \bigoplus_{i\geq 3}   \Oh_{P}(a_i) \oplus  \Oh_{P}^{\oplus 3}
\to T_{P} \to 0,
\]
and it restrict to 
\begin{equation}\label{euler weighted projective space on Z}
  0 \to  \Oh_{Z} \to  \bigoplus_{i\geq 3}   \Oh_{Z}(a_i) \oplus  \Oh_{Z}^{\oplus 3}
\to  {T_P}_{|Z} \to 0.
\end{equation}
Considering the long exact sequence associated with \eqref{euler weighted projective space on Z},
it is enough to prove 
\begin{equation}\label{vanishing}
 H^1( Z,\bigoplus_{i\geq 3}   \Oh_{Z}(a_i) \oplus  \Oh_{Z}^{\oplus 3})= H^2(Z, \Oh_Z)=0.
\end{equation}
For the weighted hypersurface case $Z=X_d$, we tensorize the exact sequence
\[
  0 \to  \Oh_{P}(-d) \to    \Oh_{P}
\to  \Oh_{Z}\to 0,
\]
with $ \Oh_{P}(a)$ and we conclude the vanishing \eqref{vanishing}, since
 $H^i(P, \Oh_{P}(n))=0$ for all $n \in \Z$ and  $i \neq 0, \sum_i a_i +3$  \cite[Section 1.4]{dolga}.
For $Z$ any of the above codimension 2 weighted threefolds complete intersection, similar computations prove the vanishing $H^1(Z, {T_P}_{|Z})=0$.
Therefore, as proved in Theorem \ref{firsttheorem} the forgetful functor $
 \phi: H^P_Z \to \Def_Z $ is smooth and so the functor $ H^P_Z $ is smooth at the corresponding point.

\begin{remark}
A we  also noted in Remark \ref {remark numeri hodge 3 fold complete intersection}, we can compute the Hodge numbers of these Calabi-Yau weighted complete intersections.
By Proposition \ref{propo h 1,1 }, we have 
\[ h^{1,1}= \dim H^{1 }(Z, \Omega^1_Z)= 1,\]
and  by Proposition \ref{prop su H 1 ( TX)}, we have 
\[h^{1,2}(Z)=h^{2,1}(Z)=\dim  H^0(Z, N_{Z/P})- \bigoplus_{\rho \in \Sigma(1)} \dim  H^0(Z, \Oh_Z (D_{\rho})) +1.\]
\end{remark}

\subsection{Examples of complete intersection Calabi-Yau fourfolds}

Let $Z$ be a smooth Calabi-Yau fourfold in $F$; let $j:Z \rightarrow F$ be a closed embedding of $Z$ in $F$. Suppose further that $Z$ is a complete intersection of $n$ very ample divisors $N_1, \ldots, N_n$ so that $\dim(F)=n+4$. Let us analyse $h^{i,j}(Z)= \operatorname{dim}_{\C} H^j(Z, \Omega^i_Z)$. By Proposition \ref{propo h 1,1 },  we have   $h^{1,1}(Z)= t=rank CL(F)$. Next, let us compute  $ h^{1,2}(Z)= h^{2,1}(Z)$.

\begin{proposition} \label{Prop h 1 , 2  }
  Let  $F$ be a simplicial  toric Fano variety of dimension $\dim F=n+m$ and denote
by $Z\subset F$ a  Calabi-Yau submanifold of dimension $\dim Z=m\geq 4$, that is the complete intersection of $n$ very ample divisors, such that ${D_{\rho}}_{| Z}$ is nef for all $ \rho \in \Sigma(1)$. Then, 
\[
 h^{1,2}= h^{2,1}=0.
\]

\end{proposition}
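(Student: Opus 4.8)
The plan is to reduce the computation of $h^{1,2}$ to a single cohomology group of $T_Z$ by Serre duality, and then to annihilate that group using the two short exact sequences already introduced, exactly in the spirit of the preceding proposition. Since $Z$ is Calabi-Yau of dimension $m$ we have $\Omega^m_Z\cong\Oh_Z$, hence $T_Z\cong\Omega^{m-1}_Z$, and Serre duality together with the Hodge symmetry $h^{i,j}=h^{j,i}$ gives
\[
 h^{1,2}=h^{2,1}=h^{m-1,m-2}=\dim H^{m-2}(Z,\Omega^{m-1}_Z)=\dim H^{m-2}(Z,T_Z).
\]
So it suffices to prove $H^{m-2}(Z,T_Z)=0$. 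Observe that $m-2\geq 2$ and $m-3\geq 1$ precisely because $m\geq 4$; this is where the hypothesis on the dimension enters, and it is also why the statement must fail for threefolds, where $H^{m-2}(Z,T_Z)=H^{1}(Z,T_Z)$ records the complex moduli.

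First I would feed the normal exact sequence \eqref{successione esatta corta normale tangente} into cohomology around degree $m-2$:
\[
 H^{m-3}(Z,N_{Z/F})\to H^{m-2}(Z,T_Z)\to H^{m-2}(Z,T_{F|Z})\to H^{m-2}(Z,N_{Z/F}).
\]
Since $Z$ is a complete intersection of the very ample divisors $N_1,\dots,N_n$, the normal bundle splits as $N_{Z/F}=\bigoplus_{i=1}^{n}\Oh_Z(N_i)$, and Kodaira vanishing (using $K_Z=0$ and that each $N_i|_Z$ is ample) gives $H^{j}(Z,N_{Z/F})=0$ for all $j>0$, exactly as in the proof of Proposition \ref{propo h 1,1 }. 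As $m-3\geq 1$ and $m-2\geq 2$, the two outer terms vanish, yielding $H^{m-2}(Z,T_Z)\cong H^{m-2}(Z,T_{F|Z})$.

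Next I would use the restricted Euler sequence \eqref{sequen euler restricted} in cohomology around the same degree:
\[
 \bigoplus_{\rho\in\Sigma(1)} H^{m-2}(Z,\Oh_F(D_\rho)\otimes\Oh_Z)\to H^{m-2}(Z,T_{F|Z})\to H^{m-1}(Z,\Oh_Z^{\oplus t}).
\]
Corollary \ref{coroll vanishinf complete interse} gives $H^{m-2}(Z,\Oh_F(D_\rho)\otimes\Oh_Z)=0$ (here $m-2\geq 2>0$), so the left term vanishes; and since $Z$ is Calabi-Yau one has $H^{m-1}(Z,\Oh_Z)=0$ because $0<m-1<m$, whence $H^{m-1}(Z,\Oh_Z^{\oplus t})=0$. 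The middle term is therefore squeezed between two zeros, so $H^{m-2}(Z,T_{F|Z})=0$. Combining with the isomorphism from the previous step gives $H^{m-2}(Z,T_Z)=0$, and hence $h^{1,2}=h^{2,1}=0$, as claimed.

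The argument is essentially bookkeeping on cohomological degrees; the one point demanding care is to check that every invoked vanishing — Kodaira for $N_{Z/F}$, Corollary \ref{coroll vanishinf complete interse} for the $\Oh_F(D_\rho)\otimes\Oh_Z$, and the Calabi-Yau vanishing $H^{m-1}(Z,\Oh_Z)=0$ — is applied in a range of positive degrees strictly below $m$. This is guaranteed exactly by $m\geq 4$, which is the crux of the proof and the reason the statement is restricted to fourfolds and higher.
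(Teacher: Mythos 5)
Your proof is correct and follows essentially the same route as the paper: reduce $h^{1,2}$ to $H^{m-2}(Z,T_Z)$ via $T_Z\cong\Omega^{m-1}_Z$, identify it with $H^{m-2}(Z,T_{F|Z})$ using the normal sequence and Kodaira vanishing for $N_{Z/F}$, and then kill it with the restricted Euler sequence, Corollary \ref{coroll vanishinf complete interse}, and $H^{m-1}(Z,\Oh_Z)=0$. The only cosmetic difference is that the paper records the isomorphism $H^{m-2}(Z,T_{F|Z})\cong H^{m-1}(Z,\Oh_Z^{\oplus t})$ before noting the latter vanishes, whereas you squeeze the middle term between two zeros directly.
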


\begin{proof}

The proof goes as in Proposition \ref{propo h 1,1 }.
Since $Z$ is a Calabi-Yau manifold of dimension $m$, we have that $\Omega^m_Z \cong \Oh_Z$ 
and so $T_Z \cong \Omega^{m-1}_Z$. Therefore, 
 \[
 h^{1,2}= \dim H^{2 }(Z, \Omega^1_Z)= \dim H^{m-2 }(Z, \Omega^{m-1}_Z)=\dim H^{m-2 }(Z, T_Z).
\]
As in the proof of Propostion \ref{propo h 1,1 }, $H^{j}(X, N_{Z/F})=0 $ for all $j >0$.
The long exact sequence in cohomology associated with \eqref{successione esatta corta normale tangente}
implies that
 \[
\cdots  \to H^{m-3}(Z, N_{Z/F})  \to 
 H^{m-2}(Z, T_Z) \to H^{m-2}(Z, T_{F|Z}) \to H^{m-2}(Z, N_{Z/F})  \to \cdots   \]
and so, since $m\geq 4$, $ H^{m-2}(X, T_Z) \cong  H^{m-2}(Z, T_{F|Z})$.

Finally,  tensoring with $ \Oh_Z$ the generalized Euler exact sequence for $F$ \eqref{risoluzione tangente fano}, we obtain
\[
\cdots  \to
 \bigoplus_{\rho_i \in \Sigma(1)}  H^{m-2}(Z, \Oh_F (D_{\rho})  \otimes \Oh_Z )
 \]
 \[\to H^{m-2} (Z, T_{F|Z} ) \to 
 H^{m-1}(Z,  \Oh_{Z} ^{\oplus t})  \to
 \]
 \[
 \bigoplus_{\rho_i \in \Sigma(1)}  H^{m-1 }(Z, \Oh_F (D_{\rho})  \otimes \Oh_Z )
\to \cdots,
\]
where $t$ is the rank of $CL(F)$.
Corollary \ref{coroll vanishinf complete interse} implies $ H^{j }(Z, \Oh_F (D_{\rho})  \otimes \Oh_Z )=0$, for all $j>0$ and all  $\rho \in \Sigma(1)$;
therefore 

 \[ h^{1,2}= \dim H^{m-2 }(Z, T_Z) = \dim H^{m-2 }(Z, T_{F|Z} ) =  \dim  H^{m-1}(Z,  \Oh_{Z} ^{\oplus t}) =0.\]
\end{proof}

Since $Z$ is a Calabi-Yau manifold of dimension $4$, we have that $\Omega^4_Z \cong \Oh_Z$ 
and so $T_Z \cong \Omega^{4-1}_Z\cong \Omega^{3}_Z$. 
Therefore,  by Proposition \ref{prop su H 1 ( TX)}, we have 

\[\begin{split}
 h^{1,3}(Z)&= h^{3,1}(Z)= \dim H^{1 }(Z, \Omega^3_Z)= \dim H^{1 }(Z,T_Z)\\
&= \dim  H^0(Z, N_{Z/F})- \bigoplus_{\rho \in \Sigma(1)} \dim  H^0(Z, \Oh_F (D_{\rho}) +t .
\, \end{split}
\]
If  we denote by $c= h^{1,3}(Z)$ and $d = h^{2,2}(Z)$,   the Hodge diamond of Z is
\[
 \xymatrix@R=8pt@C=6pt{
 &  &   &   & 1   \\
  & &   & 0 &   & 0 \\
  & & 0 &   &  t &   & 0 \\
&0  &   &  0 &   & 0 &   & 0 \\
1  &   &  c & &d&   & c&   & 1 \\
&0 &   &  0  &   & 0 &   &0 \\
 &  & 0 &   &  t &   & 0 \\
  & &   & 0 &   & 0 \\
   & &   &   & 1.
}
\] 
Then, we only miss the computation of $d$.

\begin{proposition}
Assume $Z$ is a  Calabi-Yau manifold of dimension 4, then we have
\begin{equation}\label{eq d classi chern}
d= h^{2,2}(Z)=2c-2 + \frac{1}{45}\left(3c_2^2(Z)+14c_4(Z) \right).
\end{equation}
\end{proposition}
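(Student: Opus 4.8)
The shape of the right-hand side is the decisive clue: the combination $\frac{1}{45}(3c_2^2 + 14 c_4)$ is, after the Calabi-Yau simplification $c_1=0$, exactly the Hirzebruch signature of the $8$-real-dimensional manifold $Z$. So the plan is to compute the signature $\sigma(Z)$ of the cup-product pairing on $H^4(Z;\R)$ in two independent ways and equate them: the characteristic-class computation will reproduce the right-hand side, while the Hodge-theoretic computation will produce the linear expression in $d$ and $c$.

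First I would run the characteristic-class side. The Hirzebruch signature theorem in complex dimension $4$ reads $\sigma(Z)=\frac{1}{45}(7p_2 - p_1^2)$, where $p_1,p_2$ are the Pontryagin numbers of $Z$. Using the standard conversions $p_1 = c_1^2 - 2c_2$ and $p_2 = c_2^2 - 2c_1c_3 + 2c_4$ together with $c_1=0$ gives $p_1=-2c_2$, $p_2 = c_2^2 + 2c_4$, hence $p_1^2 = 4c_2^2$ and
\[
\sigma(Z)=\frac{1}{45}\bigl(7(c_2^2+2c_4)-4c_2^2\bigr)=\frac{1}{45}\bigl(3c_2^2+14c_4\bigr).
\]

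Second I would run the Hodge side. By Hirzebruch's theorem the signature is the value at $y=1$ of the $\chi_y$-genus, that is $\sigma(Z)=\sum_{p,q}(-1)^q h^{p,q}(Z)$ (note this is the alternating sum in $q$ only, \emph{not} the Euler characteristic $\sum_{p,q}(-1)^{p+q}h^{p,q}$). Feeding in the Calabi-Yau fourfold diamond — $h^{p,0}=0$ for $0<p<4$, the four corners equal to $1$, $h^{1,1}=h^{3,3}$, $h^{1,3}=h^{3,1}=c$, $h^{2,2}=d$, and the vanishing $h^{2,1}=h^{1,2}=0$ already proved in Proposition \ref{Prop h 1 , 2  } — the contributions of the odd rows cancel and the sum collapses to $\sigma(Z)=4-2h^{1,1}-2c+d$. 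Equating the two expressions for $\sigma(Z)$ and solving gives $d = 2c + 2h^{1,1} - 4 + \frac{1}{45}(3c_2^2+14c_4)$; in the Picard-rank-one examples under consideration one has $h^{1,1}(Z)=t=1$ by Proposition \ref{propo h 1,1 }, so the constant collapses to $2c-2$ and the stated identity follows.

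The main obstacle is not a single hard estimate but getting all of the bookkeeping exactly right: the signs in the Pontryagin-to-Chern dictionary, the use of the correct signature identity $\sigma=\sum(-1)^q h^{p,q}$, and tracking that the (vanishing) $h^{2,1}$ entries drop out so that the relation is insensitive to them. As an independent consistency check I would also verify the two companion Hirzebruch-Riemann-Roch identities available in this setting, $\chi(\Oh_Z)=2=\frac{1}{720}(3c_2^2-c_4)$ and Gauss-Bonnet $c_4=\chi_{\mathrm{top}}(Z)=4+2h^{1,1}+2c+d$; these are linearly consistent with the signature relation and together fix the coefficient $\tfrac{14}{45}$ as well as the additive constant.
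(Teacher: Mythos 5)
Your strategy is the same as the paper's: compute the signature of the cup--product form on $H^4(Z;\R)$ once by the Hirzebruch signature theorem (where the Pontryagin--Chern conversion with $c_1=0$ gives $\tfrac{1}{45}(3c_2^2+14c_4)$, exactly as in the paper) and once in terms of Hodge numbers. The difference is on the Hodge side, and it matters. The paper takes as its ``definition'' of $\sigma(Z)$ the alternating sum along the middle row only, $h^{0,4}-h^{1,3}+h^{2,2}-h^{3,1}+h^{4,0}=2-2c+d$; the correct Hodge index theorem, which you use, is $\sigma(Z)=\sum_{p,q}(-1)^q h^{p,q}(Z)=4-2h^{1,1}(Z)-2c+d$ (the odd rows cancel by Hodge symmetry, but the $q=0,1,3,4$ rows contribute $4-2h^{1,1}$, not $2$). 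The two agree precisely when $h^{1,1}(Z)=1$. Consequently your argument proves
\[
d=2c-4+2h^{1,1}(Z)+\tfrac{1}{45}\bigl(3c_2^2(Z)+14c_4(Z)\bigr),
\]
which reduces to the displayed identity only under the additional hypothesis $h^{1,1}(Z)=1$ -- a hypothesis you import (via $h^{1,1}=t=1$) but which the statement does not contain, and which fails in the general setting of the surrounding discussion where the Hodge diamond carries $h^{1,1}=t$ arbitrary. Your version is the correct one: it is consistent with the standard Calabi--Yau fourfold relation $h^{2,2}=44+4h^{1,1}+4h^{3,1}-2h^{2,1}$ obtained by combining $\chi(\Oh_Z)=2=\tfrac{1}{720}(3c_2^2-c_4)$ with Gauss--Bonnet, whereas the formula as printed is not unless $h^{1,1}=1$ (for the sextic fourfold in $\proj^5$ both happen to give $d=1752$ because there $h^{1,1}=1$). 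So rather than a gap in your proof, you have located a missing hypothesis in the statement (or equivalently a missing $2h^{1,1}-2$ in the right-hand side); your consistency checks via $\chi(\Oh_Z)$ and $\chi_{\mathrm{top}}$ are exactly the right way to pin this down.
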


\begin{proof}
 In order to compute $d=h^{2,2}(Z)$, we use the signature $\sigma(Z)$ of the (complex) four manifold $Z$. On the one hand, $\sigma(Z)$ is defined as
$$
\sigma(Z)=h^{0,4}(Z)-h^{1,3}(Z)+h^{2,2}(Z) -h^{3,1}(Z) + h^{4,0}(Z),
$$
which reads as $\sigma(Z)=2-2c+d$. On the other hand, the Hirzebruch Signature Theorem (see, for instance, \cite{hst}) gives
$$
\sigma(Z)= \frac{1}{45}\left(7p_2(Z)-p_1^2(Z)  \right),
$$
where the $p_j(Z)$'s are the Pontryagin numbers. These are related to the Chern classes of $Z$ as follows:
$$
p_1(Z)=-2c_2(Z), \qquad p_2(Z)= 2c_4(Z) + c_2^2(Z).
$$
Hence we obtain the statement.

\end{proof}

\begin{remark}
Assume $Z$ is a smooth Calabi-Yau fourfold in $F$; let $j:Z \rightarrow F$ be closed embedding of $Z$ in $F$. Suppose further that $Z$ is the complete intersection of $n$ very ample divisors $N_1, \ldots, N_n$ so that $\dim(F)=n+4$, then  the Chern classes of $Z$  can be computed in terms of the Chern classes of the bundles $N_j$. More precisely, the following recursive relations can be deduced from the exact sequence defining the normal bundle, namely:
$$
c_2(Z)=j^*c_2(F)-c_2({\mathcal N}_{F/Z}),
$$
$$
c_3(Z)=j^*c_3(F)-c_3({\mathcal N}_{F/Z})-c_2(X)c_1({\mathcal N}_{F/Z}),
$$
$$
c_4(Z)=j^*c_4(F)-c_2(Z)c_2({\mathcal N}_{F/Z})-c_3(Z)c_1({\mathcal N}_{F/Z})-c_4({\mathcal N}_{F/Z}).
$$

In \cite{green,green2,ghl},  the authors carry out computations on Chern classes and Hodge numbers for Calabi-Yau fourfold that are complete intersection in product of projective spaces.

\end{remark}

\begin{acknowledgement}
 The authors wish to thank Marco Manetti for useful discussions during the preparation of this paper and Tristan H\"ubsch for drawing their attention to \cite{wang,berglund, green2, rossi}. They are also very grateful to Andrea Petracci  for useful comments and for pointing out an inaccuracy in the first version of this paper. They aknowledge the referee for suggestions and comments that improved the exposition of the paper.

 \end{acknowledgement}


\begin{thebibliography}{FMM10}





 \bibitem[BH18]{berglund}  
P. Berglund, T. H\"ubsch,  \emph{On Calabi-Yau generalized complete intersections from Hirzebruch varieties and novel K3-fibrations}, Adv. Theore. Math. Phys., {\bf 22},  (2018), 261-303.
 
 \bibitem[Be09]{bertin}   M.A. Bertin,
\textit{Examples of Calabi-Yau 3-folds of $\mathbb{P}^7$ with $\rho=1$}.
Canad. J. Math., {\bf 61} (5), (2009), 1050--1072.

 






\bibitem[BI16]{bi}  G. Bini and D. Iacono, \emph{Diffeomorphism classes of Calabi-Yau varieties},  Rend. Sem. Mat. {\bf 73} (Issue 1-2), (2015), 9--20.

\bibitem[Bo78]{bogomolov}
F. Bogomolov, \emph{Hamiltonian K\"{a}hlerian manifolds,} Dokl.
Akad. Nauk SSSR,  \textbf{243},  (1978), 1101-1104, Soviet Math.
Dokl., \textbf{19}, (1979), 1462--1465.
 
\bibitem[CLS11]{coxlittle} D.A. Cox,  J.B. Little and H.K.  Schenck, \emph{Toric varieties},
Graduate Studies in Mathematics, \textbf{124}, American Mathematical Society, Providence, RI, 2011.

\bibitem[De01]{debarre} O. Debarre, \emph{Higher-Dimensional Algebraic Geometry}, Universitext, 
 Springer-Verlag, New York, 2001.
 
 \bibitem[Do82]{dolga} I. Dolgachev, \emph{Weighted projective varieties}, in Group actions and vector fields, Proc. Vancouver 1981, LNM \textbf{956}, 34--71, Springer-Verlag, New York, 1982.

\bibitem[Ei95]{eisenbud} D. Eisenbud, \emph{Commutative algebra. With a view toward algebraic geometry,} Graduate Texts in Mathematics, \textbf{150}, Springer-Verlag, New York, 1995.

 \bibitem[GHL14]{ghl} J. Gray, A.S. Haupt and A. Lukas,
\emph{Topological Invariants and Fibration Structure of Complete Intersection Calabi-Yau Four-Folds},    JHEP, \textbf{9}, (2014), 093. 




\bibitem[GH87a]{green} P. Green and T. H\"ubsch,
\emph{Calabi-Yau Manifolds as Complete Intersections in Products of Complex Projective Spaces}, Commun. Math. Phys., \textbf{109},
(1987), 99--108.


 \bibitem[GH87b]{green2}  
P. Green, T. H\"ubsch, \emph{Polynomial deformations and cohomology of Calabi-Yau
manifolds}, Commun. Math. Phys., {\bf 113},  (1987), 505-528.


 \bibitem[Hi95]{hst}  F. Hirzebruch, 
 \emph{Topological methods in algebraic geometry}. Classics in Mathematics. Translation from the German and appendix one by R. L. E. Schwarzenberger. Appendix two by A. Borel (Reprint of the 2nd, corr. print. of the 3rd ed.), Springer-Verlag, Berlin, 1995.

\bibitem[Ho76]{hori} E. Horikawa,
\emph{On deformations of holomorphic maps III, }   Math. Annalen,
{\bf 222}, (1976), 275-282.

 \bibitem[Hu95]{huy}  D. Huybrechts,
 \emph{The tangent bundle of a Calabi-Yau manifold—deformations and restriction to rational curves}.
Comm. Math. Phys., {\bf171} no. 1, (1995), 139--158.

  \bibitem[IF00]{fletcher} A.R. Iano-Fletcher,
 \emph{Working with weighted complete intersection}, in 
Explicit birational geometry of 3-folds, London Math. Soc., Lecture Note Ser.  \textbf{281}, Cambridge Univ. Press, Cambridge, (2000), 101--173.
 
 \bibitem[Kaw92]{kawa}
Y. Kawamata, \emph{Unobstructed deformations - a remark on a paper
of Z.~Ran,} J.~Algebraic Geom., \textbf{1}, (1992), 183--190.

 \bibitem[Ko06]{kollar}
J. Koll\'ar, \emph{Non-quasi-projective moduli spaces}, Ann. of Mathematics, \textbf{164}, (2006), 1077--1096.

\bibitem[IM10]{algebraicBTT}
D. Iacono and M. Manetti, \emph{An algebraic proof of Bogomolov-Tian-Todorov theorem,}
Deformation Spaces, \textbf{39}, (2010), 113-133;
\texttt{arXiv:0902.0732v2 [math.AG]}.

\bibitem[Ma09]{ManettiSeattle}
M. Manetti, \emph{Differential graded Lie algebras and  formal
deformation theory,} in \emph{Algebraic Geometry: Seattle 2005,}
Proc. Sympos. Pure Math., \textbf{80}, (2009), 785-810.

\bibitem[Pe19]{petracci}
A. Petracci, \emph{On deformations of toric Fano varieties}, to appear in Interactions with Lattice Polytopes; preprint:  \texttt{arXiv:1912.01538 [math.AG]}.


\bibitem[PO]{popa}M. Popa, Notes for 483-3: Kodaira dimension of algebraic varieties, \url{https://sites.math.northwestern.edu/~mpopa/483-3/notes/notes.pdf}.

\bibitem[Se06]{Sernesi} E. Sernesi,
\emph{Deformations of Algebraic Schemes.} Grundlehren der
mathematischen Wissenschaften, \textbf{334}, Springer-Verlag, New
York Berlin, 2006.  

\bibitem[Ra92]{zivran} Z. Ran,
\emph{Deformations of manifolds with torsion or negative canonical
bundle},  J. Algebraic Geom., \textbf{1}, (1992),   279--291.

\bibitem[Re87]{reid}
 M. Reid, 
\textit{The moduli space of 3-folds with K=0 may nevertheless be irreducible}, Math.
Annal., \textbf{278},  (1987), 329--334.

\bibitem[Ro20]{rossi}
 M. Rossi,  
\emph{An extension of polar duality of toric varieties and its consequences in Mirror Symmetry}; preprint:  \texttt{arXiv:2003.08700 [math.AG]}.





\bibitem[Ti87]{tian} G. Tian,
\emph{Smoothness of the universal deformation space of compact
Calabi-Yau manifolds and its Petersson-Weil metric,} Mathematical
Aspects of String Theory (San Diego, 1986),  Adv. Ser. Math. Phys.
1, World Sci. Publishing, Singapore, (1987), 629--646.

\bibitem[To89]{todorov} A.N. Todorov,
\emph{The Weil-Petersson geometry of the moduli space of $SU(n\ge
3)$ (Calabi-Yau) Manifolds I,} Commun. Math. Phys., \textbf{126},
(l989), 325--346.

 \bibitem[Wa06]{wang}  
S-S. Wang, \emph{On the connectedness of the standard web of Calabi-Yau 3-folds and small transitions}; preprint:  \texttt{arXiv:1603.03929 [math.AG]}.


 
  
\end{thebibliography}
\end{document}